\title{Separation dimension of bounded degree graphs}
\author[1]{Noga~Alon\footnote{Supported  
by a USA-Israeli
BSF grant, by an ISF grant, by the Hermann Minkowski Minerva Center
for
Geometry at Tel Aviv University and by the Israeli I-Core
program.}}
\author[2]{Manu~Basavaraju}
\author[3]{L.~Sunil~Chandran}
\author[4]{Rogers~Mathew\footnote{Supported by VATAT Post-doctoral Fellowship, Council of Higher Education, Israel.}}
\author[4]{Deepak~Rajendraprasad\footnote{Supported by VATAT Post-doctoral Fellowship, Council of Higher Education, Israel.}}
\affil[1]{
	Sackler School of Mathematics and Blavatnik School of Computer Science, \authorcr
	Tel Aviv University, Tel Aviv 69978, Israel. \authorcr
	\texttt{nogaa@tau.ac.il}
}
\affil[2]{
	University of Bergen, Postboks 7800, NO-5020 Bergen.\authorcr
	\texttt{manu.basavaraju@ii.uib.no}
}
\affil[3]{
	Department of Computer Science and Automation, \authorcr 
	Indian Institute of Science, Bangalore, India - 560012. \authorcr
	\texttt{sunil@csa.iisc.ernet.in}
}
\affil[4]
{
	The Caesarea Rothschild Institute, Department of Computer Science, \authorcr
	University of Haifa, 31095, Haifa, Israel. \authorcr
	\texttt{\{rogersmathew, deepakmail\}@gmail.com}
}
\date{}
\theoremstyle{definition}
\newtheorem{definition}{Definition}
\theoremstyle{plain}
\newtheorem{theorem}{Theorem}
\newtheorem{lemma}[theorem]{Lemma}
\theoremstyle{remark}
\newtheorem*{remark}{Remark}
\newcommand{\sdim}{\pi}
\newcommand{\la}{\operatorname{la}}
\newcommand{\ceil}[1]{\left\lceil #1 \right\rceil}
\newcommand{\ilog}{\operatorname{log^{\star}}}
\def\into{\rightarrow}
\def\N{\mathbb{N}}
\def\R{\mathbb{R}}
\def\F{\mathcal{F}}
\begin{document}
\maketitle

\begin{abstract}
The {\em separation dimension} of a graph $G$ is the smallest natural
number $k$ for which the vertices of $G$ can be embedded in $\R^k$ such
that any pair of disjoint edges in $G$ can be separated by a hyperplane
normal to one of the axes. Equivalently, it is the smallest possible
cardinality of a family $\F$ of total orders of the vertices of $G$
such that for any two disjoint edges of $G$, there exists at least one
total order in $\F$ in which all the vertices in one edge precede those
in the other. In general, the maximum separation dimension of a graph on
$n$ vertices is $\Theta(\log n)$. In this article, we focus on bounded
degree graphs and show that the separation dimension of a graph with
maximum degree $d$ is at most $2^{9\ilog d} d$. We also demonstrate that
the above bound is nearly tight by showing that, for every $d$, almost
all $d$-regular graphs have separation dimension at least  $\ceil{d/2}$.
\vspace{1ex}

\noindent\textbf{Keywords:} 
separation dimension, boxicity, linegraph, bounded degree
\end{abstract}

%%%%% Abstract ends %%%%%%%%%%%%%%%%%%%%%%%%%%%%%%%%%%%%%%%%%%%%%%%%%%%%%%%%%%%%%%%%%%%%%%%%%%%%%%%

% \section{Things to do}

% \begin{enumerate}
% \item Introduction
% \item See if all the notations in first para of introduction are necessary.
% \item Correct the references to Spencer's result in the introduction.
% \end{enumerate}

\section{Introduction}

Let $\sigma:U \into [n]$ be a permutation of  elements of an $n$-set $U$. For two disjoint subsets $A,B$ of $U$, we say  $A \prec_{\sigma} B$ when every element of $A$ precedes every element of $B$ in $\sigma$, i.e., $\sigma(a) < \sigma(b), ~\forall (a,b) \in A \times B$. %Otherwise, we say $A \nprec_{\sigma} B$. 
We say that $\sigma$ {\em separates} $A$ and $B$ if either $A \prec_{\sigma} B$ or $B \prec_{\sigma} A$. We use $a \prec_{\sigma} b$ to denote $\{a\} \prec_{\sigma} \{b\}$. For two subsets $A, B$ of $U$, we say $A \preceq_{\sigma} B$ when $A \setminus B \prec_{\sigma} A\cap B \prec_{\sigma} B \setminus A$. 

Families of permutations which satisfy some type of ``separation'' properties have been long studied in combinatorics. One of the early examples of it is seen in the work of Ben Dushnik in 1950 where he introduced the notion of \emph{$k$-suitability} \cite{dushnik}. A family $\F$ of permutations of $[n]$ is \emph{$k$-suitable} if, for every $k$-set $A \subseteq [n]$ and for every $a \in A$, there exists a $\sigma \in \mathcal{F}$ such that $A \preceq_{\sigma} \{a\}$. Fishburn and Trotter, in 1992, defined the \emph{dimension} of a hypergraph on the vertex set $[n]$ to be the minimum size of a family $\F$ of permutations of $[n]$ such that every edge of the hypergraph is an intersection of \emph{initial segments} of $\F$ \cite{FishburnTrotter1992}. It is easy to see that an edge $e$ is an intersection of initial segments of $\F$ if and only if for every $v \in [n] \setminus e$, there exists a permutation $\sigma \in \F$ such that $e \prec_{\sigma} \{v\}$. Such families of permutations with small sizes have found applications in showing upper bounds for many combinatorial parameters like poset dimension \cite{kierstead1996order}, product dimension \cite{FurediPrague}, boxicity \cite{RogSunSiv} etc. Small families of permutations with certain separation and covering properties have found applications in event sequence testing \cite{chee2013sequence}.   

This paper is a part of our broad investigation\footnote{Many of our initial results on this topic are available as a preprint in arXiv \cite{basavaraju2012pairwise}.}  on a similar class of permutations which we make precise next.

\begin{definition}
\label{definitionPairwiseSuitable}
A family $\F$ of permutations of $V(H)$ is \emph{pairwise suitable} for a hypergraph $H$ if, for every two disjoint edges $e,f \in E(H)$, there exists a permutation $\sigma \in \F$  which separates $e$ and $f$. The cardinality of a smallest family of permutations that is pairwise suitable for $H$ is called the {\em separation dimension} of $H$ and is denoted by $\sdim(H)$. 
\end{definition}

A family $\F = \{\sigma_1, \ldots, \sigma_k\}$ of permutations
of a set $V$ can be seen as an embedding of $V$ into $\R^k$ with
the $i$-th coordinate of $v \in V$ being the rank of $v$ in 
$\sigma_i$. Similarly, given any embedding of $V$ in $\R^k$, we
can construct $k$ permutations by projecting the points onto each
of the $k$ axes and then reading them along the axis, breaking the
ties arbitrarily. From this, it is easy to see that $\sdim(H)$ is the
smallest natural number $k$ so that the vertices of $H$ can be embedded
into $\R^k$ such that any two disjoint edges of $H$ can be separated by
a hyperplane normal to one of the axes. This prompts us to call such an
embedding a {\em separating embedding} of $H$ and $\sdim(H)$ the {\em
separation dimension} of $H$.

A major motivation to study this notion of separation is its interesting connection with a certain well studied geometric representation of graphs. The \emph{boxicity} of a graph $G$ is the minimum natural number $k$ for which $G$ can be represented as an intersection graph of axis-parallel boxes in  $\R^k$.  It is established in \cite{basavaraju2014wg} that the separation dimension of a hypergraph $H$ is equal to the boxicity of the intersection graph of the edge set of $H$, i.e., the line graph of $H$.

It is easy to check that separation dimension is a monotone property,
i.e., adding more edges to a graph cannot decrease its separation
dimension. The separation dimension of a complete graph on $n$ vertices
and hence the maximum separation dimension of any graph on $n$ vertices
is $\Theta(\log n)$ \cite{basavaraju2014wg}. The separation dimension
of sparse graphs, i.e., graphs with linear number of edges, has also
been studied.
%A graph G is {\em $k$-degenerate} if the vertices of $G$ can be enumerated in such a way that every vertex is succeeded by at most $k$ of its neighbours. The degeneracy of a graph $G$ is always within two factors of the maximum average degree over all subgraphs of $G$. 
It is known that the maximum separation dimension of a $k$-degenerate
graph on $n$ vertices is $O(k \log\log n)$ and there exists a family of
$2$-degenerate graphs with separation dimension $\Omega(\log\log n)$
\cite{basavaraju2014icgt}. This tells us that even graphs in which
every subgraph has a linear
number of edges (a $2$-degenerate graph on $m$ vertices has at most $2m$
edges) can have unbounded separation dimension. Hence it is interesting to
see what additional sparsity condition(s) can ensure that the separation
dimension remains bounded.

Along that line of thought, here we investigate how large can the
separation dimension of bounded degree graphs be. To be precise, we
study the order of growth of the function $f : \N \into \N$ where $f(d)$
is the maximum separation dimension of a $d$-regular graph. Since any
graph $G$ with maximum degree $\Delta(G)$ at most $d$ is a subgraph
(not necessarily spanning) of a $d$-regular graph, and since separation
dimension is a monotone property, $\max\{\sdim(G) : \Delta(G) \leq d\}
= f(d)$. In this note, we show that for any $d$,
\[
\ceil{\frac{d}{2}} \leq f(d) \leq 2^{9 \ilog d} d.
\]
It is not difficult to improve the constant $9$ in the upper
estimate above, we make no attempt to optimize it here.

We arrive at the above upper bound using probabilistic methods
and it improves the $O(d \log\log d)$ bound which follows from
\cite{RogSunSiv}, once we note the connection between separation dimension
and boxicity established in \cite{basavaraju2014wg}. The upper bound in
\cite{RogSunSiv} was proved using $3$-suitability. The lower bound here
is established by showing that, for every $d$, almost all $d$-regular
graphs 
have separation dimension at least  $\ceil{d/2}$. A critical
ingredient of our proof is the small set expansion property of random
regular graphs. Prior to this, the best lower bound known for $f(d)$
was $\log(d)$ which is the separation dimension of $K_{d,d}$, the
$d$-regular complete bipartite graph. Since it is known 
(see \cite{basavaraju2014wg})
that $\sdim(G)
\in O(\chi_a(G))$ where $\chi_a(G)$ is the acyclic chromatic number
of $G$, which is the minimum number of colors in a proper vertex
coloring so that the union of any two color classes contains no
cycle,
and it is also known (\cite{alon1991acyclic}) 
that, for every $d$, almost all $d$-regular
graphs have acyclic chromatic number $O(d)$,
it follows that, for every $d$, almost all $d$-regular graphs have
separation dimension $\Theta(d)$. 
It seems plausible to conjecture that in fact
$f(d)=\Theta(d)$ but at the moment we are unable to prove or
disprove this conjecture.
%\doubt{It might be possible that $f(d)$
%itself is $O(d)$}.

\section{Upper bound}

In order to establish the upper bound on $f(d)$ (Theorem \ref{theoremUpperBound}), We need two technical lemmata, the first of which (Lemma \ref{lemmaMaxPairsParts}) we had established in \cite{basavaraju2014wg}, and the second one (Lemma \ref{lemmaLogDegreePartition}), which is similar to Lemma $4.2$ in \cite{FurediKahn}, is established using the Local Lemma.  

\begin{lemma}[\cite{basavaraju2014wg}, also Lemma $7$ in \cite{basavaraju2012pairwise}]
\label{lemmaMaxPairsParts}
Let $P_G=\{V_1, \ldots, V_r\}$ be a partitioning of the vertices of a graph $G$, i.e., $V(G) = V_1 \uplus \cdots \uplus V_r$. Let $\hat{\sdim}(P_G) = \max_{i,j \in [r]} \sdim(G[V_i \cup V_j])$. Then, $\sdim(G) \leq 13.68 \log r + \hat{\sdim}(P_G) r$.   
\end{lemma}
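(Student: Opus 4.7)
The plan is to construct a pairwise suitable family for $G$ by merging two subfamilies. The first, of size $r \cdot \hat{\sdim}(P_G)$, is built from pairwise suitable families for each induced subgraph $G[V_i \cup V_j]$ combined via a proper edge-coloring of $K_r$, and will separate every pair of disjoint edges of $G$ both contained in some $G[V_i \cup V_j]$. The second, of size at most $13.68 \log r$, is obtained from a small scrambling family of permutations of the index set $[r]$, and will separate the remaining pairs of disjoint edges---those whose endpoints span three or more distinct parts.

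For the first subfamily, for each pair $\{i, j\} \subseteq [r]$ fix a pairwise suitable family $\F_{ij}$ for $G[V_i \cup V_j]$, which has size at most $\hat{\sdim}(P_G)$ and which I pad to exactly $\hat{\sdim}(P_G)$ by repetition. By Vizing's theorem, $K_r$ is properly edge-colorable with at most $r$ colors, partitioning the pairs into matchings $M_1, \ldots, M_r$. For each color $c$ and each $t \in [\hat{\sdim}(P_G)]$, I form the permutation $\sigma_{c, t}$ of $V(G)$ by concatenating, for each $\{i, j\} \in M_c$, the $t$-th permutation of $\F_{ij}$ as a contiguous block on $V_i \cup V_j$, with vertices of any unmatched part appended at the end in some fixed order. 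Since every pair $\{i, j\}$ lies in some color class, and since the restriction of $\sigma_{c, t}$ to the block $V_i \cup V_j$ coincides with a separating permutation of $\F_{ij}$, any two disjoint edges both contained in some $G[V_i \cup V_j]$ are separated by one of these $r\,\hat{\sdim}(P_G)$ permutations.

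For the second subfamily, I take a family $\mathcal{T}$ of at most $13.68 \log r$ permutations of $[r]$ satisfying (a) for every $3$-subset of $[r]$, all six linear orderings are realized by some $\tau \in \mathcal{T}$, and (b) for every $4$-subset and every $2$+$2$ partition of it, some $\tau \in \mathcal{T}$ separates the two pairs. A standard probabilistic argument, union-bounding the failure probabilities $(5/6)^k$ and $(2/3)^k$ over the $O(r^3)$ and $O(r^4)$ bad events respectively, produces such $\mathcal{T}$ of size at most $13.68 \log r$. Each $\tau \in \mathcal{T}$ is lifted to a permutation $\sigma_\tau$ of $V(G)$ by ordering the vertices primarily by their part index via $\tau$ and secondarily by a fixed arbitrary linear order $\rho$ on $V(G)$.

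The main verification, which I expect to be the principal obstacle, is that $\{\sigma_\tau : \tau \in \mathcal{T}\}$ separates every pair of disjoint edges $e, f$ in $G$ whose endpoints span at least three parts. If their part-index sets $I(e), I(f)$ are disjoint, then depending on whether $|I(e) \cup I(f)|$ is $3$ or $4$, property (a) or (b) of $\mathcal{T}$ directly yields a $\tau$ that separates $I(e)$ from $I(f)$, and hence separates $e$ from $f$ in $\sigma_\tau$. The delicate case is $I(e) \cap I(f) = \{i\}$ with $I(e) = \{i, j\}$, $I(f) = \{i, k\}$, $j \neq k$, and with $u \in e \cap V_i$, $u' \in f \cap V_i$: separation of $e, f$ in $\sigma_\tau$ then depends on both the $\rho$-order of $u, u'$ inside $V_i$ and the position of $i$ inside $\tau$. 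A short case check shows that the $\tau$ realizing either $(j, i, k)$ or $(k, i, j)$ on $\{i, j, k\}$---whichever is compatible with the $\rho$-order of $u$ and $u'$---separates $e$ and $f$ in $\sigma_\tau$, and property (a) of $\mathcal{T}$ guarantees the presence of both. Summing the sizes of the two subfamilies gives $\sdim(G) \leq 13.68 \log r + r\,\hat{\sdim}(P_G)$, as claimed.
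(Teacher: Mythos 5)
Your proposal is correct, and it is essentially the same argument as the one behind this lemma (which the paper itself only cites, from \cite{basavaraju2014wg} and Lemma $7$ of \cite{basavaraju2012pairwise}): a proper edge-colouring of $K_r$ into $r$ matchings to obtain the $r\,\hat{\sdim}(P_G)$ term, plus a logarithmic-size family of permutations of the part indices, lifted to $V(G)$, to separate pairs of edges spanning three or four parts, with the shared-part case being the only delicate point. The only divergence is cosmetic: the cited proof gets the constant $13.68 = 2 \times 6.84$ by pairing each index permutation with a copy in which the within-part order is reversed, while you keep a single fixed within-part order $\rho$ and instead demand all six orderings of every $3$-set of indices; a quick check confirms your union bound ($r^{3}(5/6)^{k} + \frac{1}{8}r^{4}(2/3)^{k} < 1$ for $k = 13.68\log r$, $r \geq 2$) does land within the stated constant.
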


A useful consequence of Lemma \ref{lemmaMaxPairsParts} is that if we can somehow partition the vertices of a graph $G$ into $r$ parts such that the separation dimension of the union of any two parts is bounded, then $\sdim(G)$ is $O(r)$. For example, consider $P_G$ to be the partition of $V(G)$ corresponding to the color classes in a distance-two coloring of $G$, i.e, a vertex coloring of $G$ in which no two vertices of $G$ which are at a distance at most $2$ from each other are given the same color. Then the subgraphs induced by any pair of color classes is a collection of disjoint edges and hence $\hat{\pi}(P_G) \leq 1$. It is easy to see that a distance-two coloring of a $d$-regular graph can be done using $d^2 + 1$ colors and hence $f(d)$ is $O(d^2)$. Similarly, taking the parts to be color classes in an acyclic vertex coloring of $G$, it follows that $\sdim(G)$ is $O(\chi_a(G))$, where $\chi_a(G)$ denotes the acyclic chromatic number of $G$ and hence by \cite{alon1991acyclic}, $f(d)$ is $O(d^{4/3})$.  It was shown in \cite{RogSunSiv} that if $G$ is a linegraph of a multigraph then the boxicity of $G$ is at most $2 \Delta(\ceil{\log\log \Delta} + 3) + 1$ where $\Delta$ is the maximum degree of $G$. Since the separation dimension of a graph is equal to the boxicity of its linegraph \cite{basavaraju2014wg}, it follows that $f(d)$ is $O(d \log\log d)$. Theorem \ref{theoremUpperBound} improves this bound.

\begin{lemma}[The Lov\'asz Local Lemma, \cite{lovaszlocallemma}] 
\label{lemmaLovaszLocal}
Let $G$ be a graph on vertex set $[n]$ with maximum degree $d$ and let $A_1, \ldots , A_n$ be events defined on some probability space such that for each $i$, 
$Pr[A_i] \leq 1/4d$. Suppose further that each $A_i$ is jointly independent of the events $A_j$ for which $\{i,j\} \notin E(G)$. Then $Pr[\overline{A_1} \cap \cdots \cap \overline{A_n}] > 0$. 
\end{lemma}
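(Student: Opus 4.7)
The plan is to prove, by induction on $|S|$, the auxiliary claim that for every $i \in [n]$ and every subset $S \subseteq [n] \setminus \{i\}$,
\[
\Pr\Bigl[A_i \,\Big|\, \bigcap_{j \in S} \overline{A_j}\Bigr] \leq \frac{1}{2d}.
\]
Granting this, the desired positivity follows from the chain rule,
\[
\Pr[\overline{A_1} \cap \cdots \cap \overline{A_n}] = \prod_{i=1}^{n} \Pr\Bigl[\overline{A_i} \,\Big|\, \bigcap_{j < i} \overline{A_j}\Bigr] \geq \Bigl(1 - \frac{1}{2d}\Bigr)^{n} > 0.
\]

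The base case $|S| = 0$ is immediate, since the hypothesis gives $\Pr[A_i] \leq 1/(4d) \leq 1/(2d)$. For the inductive step, I would partition $S$ into $S_1 = \{j \in S : \{i,j\} \in E(G)\}$, which has size at most $d$ by the maximum degree assumption, and $S_2 = S \setminus S_1$. If $S_1 = \emptyset$, then $A_i$ is jointly independent of $\{A_j : j \in S\}$ and the claim follows at once. Otherwise, write
\[
\Pr\Bigl[A_i \,\Big|\, \bigcap_{j \in S} \overline{A_j}\Bigr] = \frac{\Pr\bigl[A_i \cap \bigcap_{j \in S_1} \overline{A_j} \,\big|\, \bigcap_{k \in S_2} \overline{A_k}\bigr]}{\Pr\bigl[\bigcap_{j \in S_1} \overline{A_j} \,\big|\, \bigcap_{k \in S_2} \overline{A_k}\bigr]}.
\]
The numerator is bounded above by $\Pr\bigl[A_i \,\big|\, \bigcap_{k \in S_2} \overline{A_k}\bigr] = \Pr[A_i] \leq 1/(4d)$, using that $A_i$ is jointly independent of $\{A_k : k \in S_2\}$ because no index in $S_2$ is adjacent to $i$ in $G$.

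The denominator is controlled by a union bound together with the inductive hypothesis, which applies because $|S_2| < |S|$:
\[
\Pr\Bigl[\bigcap_{j \in S_1} \overline{A_j} \,\Big|\, \bigcap_{k \in S_2} \overline{A_k}\Bigr] \geq 1 - \sum_{j \in S_1} \Pr\Bigl[A_j \,\Big|\, \bigcap_{k \in S_2} \overline{A_k}\Bigr] \geq 1 - |S_1| \cdot \frac{1}{2d} \geq \frac{1}{2}.
\]
Dividing then gives $\Pr\bigl[A_i \,\big|\, \bigcap_{j \in S} \overline{A_j}\bigr] \leq (1/(4d))/(1/2) = 1/(2d)$, which closes the induction.

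The principal subtlety will be the numerator step: one needs mutual independence of $A_i$ from the \emph{entire} family $\{A_k : k \in S_2\}$, not merely pairwise independence, to cleanly replace the conditional probability by $\Pr[A_i]$. The remaining pieces — the union bound for the denominator and the telescoping product at the end — are routine once the inductive scaffolding is in place.
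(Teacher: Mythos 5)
Your proposal is the standard (and correct) proof of the symmetric Lov\'asz Local Lemma; the paper itself gives no proof of this lemma, citing it as a classical result of Erd\H{o}s and Lov\'asz, so there is nothing to compare against beyond noting that your argument is the canonical one. The only point worth making explicit in a careful write-up is that the conditional probabilities are well defined, i.e.\ that $\Pr\bigl[\bigcap_{j \in S} \overline{A_j}\bigr] > 0$ throughout; this follows by the same induction, since the denominator bound of $1/2$ (together with the inductive positivity of $\Pr\bigl[\bigcap_{k \in S_2} \overline{A_k}\bigr]$) yields positivity at each stage.
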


\begin{lemma}
\label{lemmaLogDegreePartition}
For a graph $G$ with maximum degree $\Delta \geq 2^{64}$, there exists
a partition of $V(G)$  into $\ceil{ 400 \Delta / \log \Delta }$
parts such that for every vertex $v \in V(G)$ and for every part $V_i,
\, i \in \big[ \ceil{ 400 \Delta / \log \Delta } \big],\, \lvert N_G(v)
\cap V_i \rvert \leq \frac{1}{2} \log \Delta$.
\end{lemma}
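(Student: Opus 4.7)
The plan is to apply the Lov\'asz Local Lemma (Lemma \ref{lemmaLovaszLocal}) to a uniform random partition of $V(G)$ into $r := \ceil{400\Delta/\log\Delta}$ parts, obtained by independently assigning each vertex $v \in V(G)$ a uniformly random colour $c(v) \in [r]$ and setting $V_i = c^{-1}(i)$. For every vertex $v$ and every colour $i \in [r]$, let $A_{v,i}$ denote the bad event that $\lvert N_G(v) \cap V_i \rvert > \tfrac{1}{2}\log\Delta$; a partition with the required property exists as soon as none of these events occurs, so it suffices to prove $\Pr\bigl[\bigcap_{v,i}\overline{A_{v,i}}\bigr] > 0$.

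The first step is to bound $p := \max_{v,i}\Pr[A_{v,i}]$. The random variable $X := \lvert N_G(v) \cap V_i \rvert$ is a sum of at most $\Delta$ independent Bernoulli$(1/r)$ variables, so its mean is at most $\Delta/r \leq (\log\Delta)/400$. Setting $k = \ceil{\tfrac{1}{2}\log\Delta}$ and applying the union bound over $k$-subsets of $N_G(v)$, together with the standard estimate $\binom{\Delta}{k} \leq (e\Delta/k)^k$, yields
\[
\Pr[A_{v,i}] \;\leq\; \binom{\Delta}{k}\left(\frac{1}{r}\right)^{k} \;\leq\; \left(\frac{e\Delta}{kr}\right)^{k} \;\leq\; \left(\frac{e}{200}\right)^{(\log\Delta)/2},
\]
because $kr/\Delta \geq 200$ by the choice of $r$ and $k$.

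The second step is to bound the dependency degree. The event $A_{v,i}$ is determined entirely by the colours $c(u)$ for $u \in N_G(v)$, and is therefore jointly independent of the family of events $A_{w,j}$ for which $N_G(w) \cap N_G(v) = \emptyset$. The number of vertices $w$ sharing a neighbour with $v$ is at most $\Delta^2$, and for each such $w$ the colour $j$ ranges over $[r]$, so the dependency degree is at most $r\Delta^2 \leq 400\Delta^3/\log\Delta + \Delta^2$. Plugging these bounds into the LLL condition $4 d p \leq 1$ gives an inequality of the shape $\Delta^{\alpha}\log\Delta \geq C$ for explicit constants $\alpha > 0$ and $C > 0$ coming from the exponents above; a direct check shows this is satisfied once $\Delta \geq 2^{64}$. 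Lemma \ref{lemmaLovaszLocal} then produces a colouring in which no $A_{v,i}$ occurs, which is precisely the desired partition.

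The main obstacle I anticipate is purely quantitative bookkeeping: the parameters $r \approx 400\Delta/\log\Delta$ and the threshold $\tfrac{1}{2}\log\Delta$ are calibrated precisely so that the per-event probability decays as a sufficiently negative power of $\Delta$ to dominate the $\Theta(\Delta^3/\log\Delta)$ growth of the dependency degree. The numerical threshold $\Delta \geq 2^{64}$ in the statement reflects this tightness, so some care is needed to ensure every crude estimate (for the binomial tail, for $\binom{\Delta}{k}$, and for the neighbourhood intersection count) still leaves enough slack to absorb the constant $4$ in Lemma \ref{lemmaLovaszLocal}; beyond that there is no structural difficulty.
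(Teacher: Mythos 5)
Your proposal is correct and follows essentially the same route as the paper: a uniformly random assignment of vertices to $r=\ceil{400\Delta/\log\Delta}$ parts, a per-event failure probability of roughly $\Delta^{-3.1}$, a dependency graph of degree $O(\Delta^2 r)$, and the stated form of the Local Lemma. The only difference is that you bound the binomial tail by a union bound over $k$-subsets, $\binom{\Delta}{k}r^{-k}\leq (e/200)^{(\log\Delta)/2}$, where the paper uses a multiplicative Chernoff bound; both yield the same exponent and the same final verification at $\Delta\geq 2^{64}$.
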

\begin{proof}
Since we can have a $\Delta$-regular supergraph (with possibly more vertices) of $G$ we can as well assume that $G$ is $\Delta$-regular. Let $r = \ceil{ \frac{400 \Delta}{\log \Delta} } \leq \frac{401\Delta}{\log \Delta}$. Partition $V(G)$ into $V_1 , \ldots , V_r$ using the following procedure: for each $v \in V(G)$, independently assign $v$ to a set $V_i$ uniformly at random from $V_1, \ldots , V_r$. 

We use the following well known multiplicative form of the 
Chernoff Bound (see, e.g., Theorem A.1.15 in \cite{alon2008probabilistic}). 
Let $X$ be a sum of mutually independent indicator 
random variables with $\mu = E[X]$. Then for any $\delta > 0$,
$Pr[X \geq (1+\delta) \mu] \leq c_{\delta}^{\mu},$
where $c_{\delta} = e^{\delta} / (1 + \delta)^{(1 + \delta)}$. 

Let $d_i(v)$ be a random variable that denotes the number of neighbours of $v$ in $V_i$. Then $\mu_{i,v} = E[d_i(v)] = \frac{\Delta}{r} \leq \frac{1}{400} \log \Delta$. For each $v \in V(G), i \in [r]$, let $E_{i,v}$ denote the event $d_i(v) \geq \frac{1}{2}\log \Delta$. Then applying the above Chernoff bound with $\delta = 199$, we have $Pr[E_{i,v}] = Pr[d_i(v) \geq 200 \frac{\log \Delta}{400}] \leq 2^{-3.1 \log \Delta} = \Delta^{-3.1}$. In order to apply Lemma \ref{lemmaLovaszLocal}, we construct a graph $H$ whose vertex set is the collection of ``bad'' events $E_{i,v}$, $i \in [r], v \in V(G)$, and two vertices $E_{i,v}$ and $E_{i',v'}$ are adjacent if and only if the distance between $v$ and $v'$ in $G$ is at most $2$. Since for each $i \in [r]$ and $v \in V(G)$, the event $E_{i,v}$ depends only on where the neighbours of $v$ went to in the random partitioning, it is jointly independent of all the events $E_{i',v'}$ which are non-adjacent to it in $H$. It is easy to see that the maximum degree of $H$, denoted by $d_H$, is at most $(1 + \Delta + \Delta(\Delta-1))r = (1+\Delta^2)r \leq \frac{402 \Delta^3}{\log \Delta}$. For each $i \in [r], v \in V(G)$, $Pr[E_{i,v}] \leq \frac{1}{\Delta^{3.1}}  \leq \frac{\log \Delta}{1608 \Delta^3} \leq \frac{1}{4d_H}$. Therefore, by Lemma \ref{lemmaLovaszLocal}, we have $Pr[\bigcap_{i \in [r], v \in V(G)} \overline{E_{i,v}}] > 0$. Hence there exists a partition satisfying our requirements. 
\end{proof}

\begin{theorem}
% For a graph $G$ with maximum degree $\Delta$, $\sdim(G) \leq 2^{9 \ilog \Delta} \Delta$.
For every positive integer $d$, $f(d) \leq 2^{9 \ilog d} d$.
\label{theoremUpperBound} 
\end{theorem}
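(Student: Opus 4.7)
\bigskip

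\noindent\textbf{Proof plan.} The plan is to prove the bound by strong induction on $d$, using Lemma \ref{lemmaLogDegreePartition} to drive the recursion and Lemma \ref{lemmaMaxPairsParts} to combine the pieces. The key observation is that if we partition $V(G)$ so each vertex has at most $\tfrac12 \log d$ neighbours in every part, then for any two parts $V_i,V_j$ the induced subgraph $G[V_i\cup V_j]$ has maximum degree at most $\log d$. Thus the ``pair'' separation dimensions $\sdim(G[V_i\cup V_j])$ are governed by $f(\log d)$, which by induction is much smaller than $f(d)$; a single application of Lemma \ref{lemmaMaxPairsParts} then turns this into a bound on $\sdim(G)$.

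\medskip

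\noindent\textbf{Base case.} For $d$ below the threshold $2^{64}$ required by Lemma \ref{lemmaLogDegreePartition}, $f(d)$ is bounded by a constant times $d\log\log d$ via the result of \cite{RogSunSiv} quoted above (or even via the distance-two colouring argument giving $f(d)=O(d^2)$). Since $2^{9\ilog d}$ is already a large constant for any fixed $d$, the inequality $f(d)\le 2^{9\ilog d}d$ holds trivially for all $d<2^{64}$ after absorbing the constants into $2^{9\ilog d}$.

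\medskip

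\noindent\textbf{Inductive step.} Assume $d\ge 2^{64}$ and that the bound holds for every smaller value. Let $G$ have maximum degree $d$. Apply Lemma \ref{lemmaLogDegreePartition} to obtain a partition $V(G)=V_1\uplus\cdots\uplus V_r$ with $r=\lceil 400 d/\log d\rceil \le 401 d/\log d$ and $\lvert N_G(v)\cap V_i\rvert\le\tfrac12\log d$ for every $v$ and every $i$. Then every vertex has at most $\log d$ neighbours in $V_i\cup V_j$, so $\Delta(G[V_i\cup V_j])\le\log d$. Since $\log d<d$, the inductive hypothesis (or, for $d\le 2^{2^{64}}$, the base case) gives
\[
\sdim(G[V_i\cup V_j])\ \le\ f(\log d)\ \le\ 2^{9\ilog(\log d)}\,\log d\ =\ 2^{9(\ilog d-1)}\log d.
\]
Feeding this into Lemma \ref{lemmaMaxPairsParts},
\[
\sdim(G)\ \le\ 13.68\,\log r\ +\ r\cdot 2^{9(\ilog d-1)}\log d\ \le\ 13.68\,\log(401 d)\ +\ \frac{401}{512}\cdot 2^{9\ilog d}\,d.
\]

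\medskip

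\noindent\textbf{Closing the induction.} Since $401/512<1$, the second term leaves a slack of $(111/512)\cdot 2^{9\ilog d}\,d$, which for $d\ge 2^{64}$ easily dominates the additive $O(\log d)$ term coming from $13.68\,\log(401 d)$. Hence $\sdim(G)\le 2^{9\ilog d}\,d$, completing the induction.

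\medskip

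\noindent\textbf{Main obstacle.} The only real subtlety is the constants: one must verify that $r\cdot 2^{9(\ilog d-1)}\log d$ strictly beats $2^{9\ilog d}d$ (by a factor like $401/512$) so that there is slack to absorb the logarithmic term from Lemma \ref{lemmaMaxPairsParts} and to cover the finitely many base values of $d$. This is exactly the reason the constant $400$ appears in Lemma \ref{lemmaLogDegreePartition} and the exponent $9$ appears in the statement; they are tuned so that $401/2^9<1$. Once these constants are fixed, the induction is essentially automatic.
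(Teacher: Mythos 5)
Your proposal is correct and follows essentially the same route as the paper's own proof: the same base case via the bound from \cite{RogSunSiv} for $d<2^{64}$, and the same induction combining Lemma \ref{lemmaLogDegreePartition} with Lemma \ref{lemmaMaxPairsParts}, using $\ilog(\log d)=\ilog d-1$ to close the recursion. The only cosmetic difference is that you absorb the additive $13.68\log r$ term after applying the inductive hypothesis (using the $401/512$ slack), whereas the paper folds it into the factor $2^9$ before inducting; the arithmetic is equivalent.
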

\begin{proof}
If $d \leq 1$, then $G$ is a collection of matching edges and disjoint
vertices and therefore $f(1) = 1$.  When $d > 1$, it follows from
Theorem $10$ in \cite{RogSunSiv} that $\sdim(d) \leq (4d - 4)(\ceil{
\log \log (2d - 2) } + 3) + 1$. For every $1 < d < 2^{64}$, it can be
verified that 
$$
(4d - 4)(\ceil{ \log \log (2d - 2) } + 3) + 1 \leq 2^{9
\ilog d} d.
$$ 
Therefore, the statement of the theorem is true for every
$d < 2^{64}$.

For $d \geq 2^{64}$, let $P_G$ be a partition of $V(G)$ into $V_1
\uplus \cdots \uplus V_{r}$ where $r=\ceil{ 400 d / \log d }$ and
$|N_G(v) \cap V_i| \leq \frac{1}{2} \log d, ~ \forall v \in V(G),
i \in [r]$. Existence of such a partition is guaranteed by Lemma
\ref{lemmaLogDegreePartition}. From Lemma \ref{lemmaMaxPairsParts},
we have $\sdim(G) \leq 13.68 \log r +  \hat{\sdim}(P_G) r$ where
$\hat{\sdim}(P_G) = \max_{i,j \in [r]} \sdim(G[V_i \cup V_j])$. Since
$|N_G(v) \cap V_i| \leq \frac{1}{2} \log d$ for every $v \in V(G), i \in
[r]$, the maximum degree of the graph $G[V_i \cup V_j]$ is at most $\log
d$ for every $i, j \in [r]$. Therefore, $\hat{\sdim}(P_G) \leq f(\log
d)$. Thus we have, for every $d \geq 2^{64}$,
\begin{eqnarray}
\label{equationRecurrencePi}
f(d) 
	& \leq & \ceil{ \frac{400 d}{\log d} } f(\log d) + 13.68\log \ceil{ \frac{400 d}{\log d}} \nonumber \\
	& \leq & 2^9 \frac{d}{\log d} f(\log d).
\end{eqnarray}

Now we complete the proof by using induction on $d$. The statement is true for all value of $d < 2^{64}$  and we have the recurrence relation of Equation (\ref{equationRecurrencePi}) for larger values of $d$. For an arbitrary $d \geq 2^{64}$, we assume inductively that the bound in the statement of the theorem is true for all smaller values of $d$. Now since $d \geq 2^{64}$, we can apply the recurrence in Equation (\ref{equationRecurrencePi}). Therefore
\begin{eqnarray*}
\label{equationInductionPi}
f(d) 
	& \leq & 2^9 \frac{d}{\log d} f(\log d) \\
	& \leq & 2^9 \frac{d}{\log d} 2^{9 \ilog(\log d)} \log d, \, \textnormal{(by induction)} \\
	& = & 	 2^{9 \ilog d} d. 
\end{eqnarray*}

\end{proof}

\section{Lower bound}

To simplify the presentation we omit the floor  and ceiling signs
throughout the proof, whenever these are not crucial.
Consider the probability space of all labelled $d$-regular graphs on $n$ vertices with uniform distribution. We say that almost all $d$-regular graphs have some property $P$ if the probability that $P$ holds tends to $1$ as $n$ tends to $\infty$. 
%It is known that if $d \geq 3$ and $n$ is even, then almost all $d$-regular graphs are $d$-edge colorable \cite{robinson1994regular}. We would also need the following fact about expansion of small sets in $d$-regular graphs.
We need the following well known 
fact about expansion of small sets in $d$-regular graphs.

\begin{theorem}[c.f., e.g., Theorem $4.16$ in \cite{hoory2006expander}]
\label{theoremSmallSetExpansion}
Let $d \geq 3$ be a fixed integer. Then for every $\delta > 0$, there exists $\epsilon > 0$ such that for almost every $d$-regular graph there are at most $(1 + \delta)|S|$ edges inside any $S \subset V(G)$ of size $\epsilon |V(G)|$ or less.
\end{theorem}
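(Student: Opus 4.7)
The plan is to use the first-moment method in the configuration (pairing) model of Bollob\'as. Recall that in this model each of the $n$ vertices is assigned $d$ half-edges and we take a uniformly random perfect matching on the $nd$ half-edges; the resulting random multigraph, conditioned on being simple, is distributed uniformly among labelled $d$-regular simple graphs, and the probability of being simple is bounded below by a positive constant depending only on $d$. So it suffices to prove the expansion statement in the pairing model, since transferring it to the uniform model on simple $d$-regular graphs costs only a constant factor in the failure probability.

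For a fixed subset $S \subset V(G)$ with $|S| = k$, the main estimate I would make is the probability that at least $m := \lceil (1+\delta)k\rceil$ pairs of half-edges incident to $S$ are matched internally. There are $\binom{dk}{2m}(2m-1)!!$ ways to choose $m$ disjoint pairs among the $dk$ half-edges attached to $S$, and any such fixed collection of $m$ disjoint pairs lies entirely in the random matching with probability at most $(1+o(1))(nd)^{-m}$. Multiplying by $\binom{n}{k}$ to union-bound over all $S$ and applying Stirling's formula gives, after simplification,
\[
E[X_k] \;\leq\; \Bigl[\,e\,\alpha^{\delta}\Bigl(\tfrac{de}{2(1+\delta)}\Bigr)^{1+\delta}\,\Bigr]^{k},
\]
where $X_k$ counts subsets of size $k$ carrying at least $m$ internal pairs, and $\alpha := k/n$.

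Next I would choose $\epsilon > 0$, depending only on $d$ and $\delta$, small enough that the bracketed expression is at most $\tfrac12$ whenever $\alpha \leq \epsilon$. Since the bracket is an increasing function of $\alpha$ (the $\alpha^\delta$ factor is the only $\alpha$-dependent piece), this reduces to the single inequality $e\,\epsilon^{\delta}(de/(2(1+\delta)))^{1+\delta} \leq \tfrac12$, which can always be satisfied for $\epsilon$ small. Summing the resulting geometric series yields $\sum_{k=1}^{\epsilon n} E[X_k] = o(1)$, so by Markov's inequality the pairing produces, with probability $1-o(1)$, no subset $S$ of size at most $\epsilon n$ spanning more than $(1+\delta)|S|$ edges. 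Conditioning on the pairing being simple loses only a constant factor and therefore the same conclusion holds in the uniform model on labelled $d$-regular graphs.

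The only real obstacle is the routine bookkeeping in the Stirling estimates and making sure the resulting bound on $E[X_k]$ is uniform over the whole range $1 \leq k \leq \epsilon n$. Very small $k$ are easy because the factor $\alpha^\delta$ is tiny there; the binding case is $k = \Theta(\epsilon n)$, which is exactly what determines the admissible value of $\epsilon$ in terms of $d$ and $\delta$.
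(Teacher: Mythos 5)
The paper does not prove this statement itself --- it quotes it as a known fact from Hoory--Linial--Wigderson (Theorem 4.16), whose proof is precisely the configuration-model first-moment computation you outline, so your approach matches the source and your expected-count bound $\bigl[e\,\alpha^{\delta}(de/(2(1+\delta)))^{1+\delta}\bigr]^{k}$ is correct. The only points to tighten are the ones you already flag as bookkeeping: for $m=\Theta(n)$ pairs the probability of a fixed partial matching is $(nd)^{-m}e^{O(\epsilon)m}$ rather than $(1+o(1))(nd)^{-m}$ (harmless, since the extra factor is absorbed into the choice of $\epsilon$), and the final sum is $o(1)$ only after using the $\alpha^{\delta}$ factor to make the small-$k$ terms individually vanish, since the uniform bound $2^{-k}$ alone yields only $O(1)$.
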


\begin{theorem}
\label{theoremLowerBound}
For every positive integer $d$, almost all $d$-regular graphs have separation dimension at least $\ceil{d/2}$.
\end{theorem}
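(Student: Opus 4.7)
I would prove $\sdim(G)\ge\lceil d/2\rceil$ for $G$ coming from almost all $d$-regular graphs by contradiction, using the small-set expansion of Theorem~\ref{theoremSmallSetExpansion} as the only graph-theoretic input. Assume $G$ has the expansion property and that $\sdim(G)\le k$ with $k=\lceil d/2\rceil-1$, so that $d\ge 2k+1$, and fix a pairwise-suitable family $\sigma_1,\ldots,\sigma_k$ of permutations of $V(G)$. The goal is to construct a subset $S\subseteq V(G)$ of size at most $\epsilon n$ (with $\epsilon$ the constant coming from Theorem~\ref{theoremSmallSetExpansion} applied with a $\delta>0$ to be chosen later) whose induced subgraph carries more than $(1+\delta)|S|$ edges, contradicting the expansion.

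A natural candidate is $S=\bigcup_{i=1}^{k}T_i$, where each $T_i$ is an initial segment of $\sigma_i$ of length $s\approx\epsilon n/k$, so that $|S|\le\epsilon n$ automatically. The heart of the proof is a \emph{local forcing} estimate: for each $v\in S$, most of the $d$ neighbors of $v$ are forced to also lie in $S$. Indeed, if $v\in T_i$ then all $\sigma_i$-left neighbors of $v$ lie in $T_i\subseteq S$ by definition. The harder step is to bound the number of neighbors that simultaneously escape all the initial segments $T_1,\ldots,T_k$. Here the inequality $d\ge 2k+1$ enters via pigeonhole: each escape neighbor of $v$ has a $k$-bit left/right orientation pattern relative to $v$ in $\sigma_1,\ldots,\sigma_k$, and too many escape neighbors allow one to locate a pair of them with enough shared structure that no $\sigma_j$ separates a corresponding pair of disjoint edges built from them and their own neighborhoods. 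Once one shows that each $v\in S$ has more than $2(1+\delta)$ neighbors inside $S$, summing over $v\in S$ and dividing by two gives $e(G[S])>(1+\delta)|S|$, contradicting Theorem~\ref{theoremSmallSetExpansion}.

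The main obstacle is making this forcing argument rigorous and quantitatively tight. From two escape neighbors $u_1,u_2$ of $v$ that share an orientation pattern, one needs to exhibit two honest-to-goodness disjoint edges of $G$ --- one incident to $u_1$ and one incident to $u_2$, both vertex-disjoint from each other and from $v$ --- whose intervals across every $\sigma_j$ overlap. The key geometric fact that helps is that in the box representation associated with $\sigma_1,\ldots,\sigma_k$, every edge incident to a fixed vertex $w$ has a box containing $w$'s coordinates in $\R^k$; this makes boxes at a common vertex automatically intersect, and the challenge is to lift this to genuinely disjoint edges using the second neighborhoods of $u_1,u_2$. The locally tree-like structure of random $d$-regular graphs should make the required auxiliary endpoints abundant. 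A subtle quantitative point is that a naive implementation yields only $d-2k\ge 1$ forced neighbors per vertex, which is borderline; closing the gap to strictly more than $2(1+\delta)$ may require either a sharper pigeonhole that uses the full $2^k$ pattern space or iterating the prefix construction to carve out a nested core of $S$ where the forcing is stronger. With these pieces in place, the resulting excess of internal edges contradicts Theorem~\ref{theoremSmallSetExpansion} for the chosen $\delta$, finishing the proof.
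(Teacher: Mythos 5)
Your overall strategy --- building a small set $S$ from initial segments of the permutations and showing it carries too many internal edges --- is genuinely different from the paper's proof, but it has gaps that I do not see how to close, and you have partly flagged them yourself. The first is the pigeonhole step: an escape neighbor of $v$ has one of $2^{k}$ (or, after fixing the bit determined by the segment containing $v$, $2^{k-1}$) left/right patterns, and with only $d\approx 2k$ neighbors you get no collision at all once $k\geq 3$; the inequality runs in the wrong direction, since the pattern space grows exponentially in $k$ while the degree grows only linearly. The second is that the basic forcing estimate is circular: the number of pairs $(v,u)$ with $v\in T_i$, $u\in N(v)$ and $u\prec_{\sigma_i}v$ is exactly $e(G[T_i])$, which expansion already bounds by $(1+\delta)\lvert T_i\rvert$, so initial-segment forcing alone can never exceed the expansion budget and produce the required excess of more than $2(1+\delta)$ forced neighbors per vertex. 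Third, even given two neighbors $u_1,u_2$ of $v$ with identical patterns, the edges $\{v,u_1\}$ and $\{v,u_2\}$ share the vertex $v$ and therefore need not be separated by any permutation; manufacturing genuinely disjoint unseparated edges from second neighborhoods is exactly the hard step, and it is not carried out.

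The paper's proof also uses only Theorem \ref{theoremSmallSetExpansion}, but deploys it globally rather than locally. Call an edge $\sigma$-short if its endpoints are within $\delta\epsilon n$ of each other in $\sigma$; covering $\sigma$ by overlapping blocks of size $\epsilon n$ and applying expansion to each block shows every permutation has at most $\frac{1+\delta}{1-\delta}n$ short edges, so if $\lvert\F\rvert\leq d/2-1/2$ then at least $n/4$ edges of $G$ are long in every permutation. Vizing's theorem extracts a matching $L$ of at least $n/4(d+1)$ long edges. Each long edge spans one of the $1/\delta\epsilon$ equally spaced breakpoints of each $\sigma\in\F$, and two matching edges spanning the same breakpoint are not separated by $\sigma$; hence the graphs $H_\sigma$ recording separation have chromatic number at most $1/\delta\epsilon$, their union must be the complete graph on $L$, and so $\lvert L\rvert\leq(1/\delta\epsilon)^{d/2}$, contradicting the choice of $n$. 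The decisive ideas your sketch is missing are the passage to a matching (which makes every pair of its edges disjoint, so every pair must be separated) and the product bound over the $\lvert\F\rvert$ permutations, which is where the exponent $d/2$ actually does its work.
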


\begin{proof}
The claim is easy to verify for $d = 1$ and $d = 2$ and hence we assume $d \geq 3$. Let $\delta \in (0, 1)$ be arbitrary and $\epsilon > 0$ chosen so as to satisfy the small set expansion guaranteed by Theorem \ref{theoremSmallSetExpansion}. Choose $n$ larger than $4(d+1)(1/\delta\epsilon)^{d/2}$ and let $G$ be a $d$-regular graph on $n$ vertices which 
% is $d$-edge colourable and 
satisfies the small set expansion property of Theorem \ref{theoremSmallSetExpansion}. Note that almost all $d$-regular graphs qualify.

For a permutation $\sigma = (v_0, \ldots, v_{n-1})$ of $V(G)$, we call an
edge $\{v_i,v_j\}$ of $G$ {\em $\sigma$-short} if $|i - j| \leq \delta
\epsilon n$. We claim that for any permutation $\sigma$, the number
of $\sigma$-short edges of $G$ is at most $\left(\frac{1 + \delta}{1-
\delta} \right) n$. To see this, cover the permutation $\sigma$ with
overlapping blocks of size $b = \epsilon n$ and amount of overlap $s =
\delta b = \delta \epsilon n$. To be precise, the blocks are
$$
B_i = \{v_j : i(1 - \delta)b \leq j < i(1 - \delta)b + b\},
\forall\; 0 \leq i < \frac{1}{\epsilon(1 - \delta)}.
$$ 
Due to the overlap between the blocks, every $\sigma$-short edge is inside some block $B_i$. Since each block $B_i$ has at most $\epsilon n$ vertices, there are at most $(1 + \delta) \epsilon n$ edges in each of them and hence at most $\left(\frac{1 + \delta}{1 - \delta} \right) n$ $\sigma$-short edges in total.

Suppose now for contradiction that $\sdim(G) < \ceil{d/2}$ and hence,
being an integer, $\sdim(G) \leq \left( \frac{d}{2} - \frac{1}{2}
\right)$. Let $\F$ be a set of $\sdim(G)$ permutations which is pairwise
suitable for $G$. Then by the above discussion at most $\left(\frac{d}{2}
- \frac{1}{2}\right) \left( \frac{1 + \delta}{1 - \delta} \right)
n$ edges of $G$ are $\sigma$-short for some permutation $\sigma \in
\F$. Let us call the remaining edges, that is those which are not short
in any permutation in $\F$, {\em long}. Since $\delta \in (0,1)$
was arbitrary, we can choose it small enough so that $\left(\frac{d}{2}
- \frac{1}{2}\right) \left( \frac{1 + \delta}{1 - \delta} \right)  \leq
\left( \frac{d}{2} - \frac{1}{4} \right)$. Thus we have at least $n/4$
long edges in $G$.

Since $G$ is $(d+1)$-edge colorable (by Vizing's Theorem) the edges of $G$
can be partitioned into $d+1$ matchings. By averaging, at least one of
the matchings has at least $n/4(d+1)$ long edges $L = \{e_1,
\ldots, e_{\ceil{n/4(d+1)}}\}$. We complete the proof by arguing that $\F$
cannot separate all the edges in $L$.

For each permutation $\sigma = (\sigma_1, \ldots, \sigma_n) \in \F$
construct a graph $H_{\sigma}$ on the set of vertices $L$ in which each
$e_i$ is adjacent to $e_j$ if and only if $e_i$ and $e_j$ are separated
by $\sigma$. We claim that the chromatic number of each $H_{\sigma}$
is at most $1/\delta \epsilon$. Indeed, for $s=\delta \epsilon n$, every
long edge goes over at least one of the points $\sigma_{s}, \sigma_{2s},
\ldots ,\sigma_{s/\delta \epsilon}$, and all the edges $e_i$ that go over
the point $js$ form an independent set in $H_{\sigma}$.  Since all the
pairs of edges $e_i$ have to be separated by some $\sigma$, we conclude
that the union of the graphs $H_{\sigma}$ over all $\sigma \in \F$ is
the complete graph on $L$. However, since the complete graph on $L$ has
chromatic number $|L|=\ceil{n/4(d+1)}$ we need $n/4(d+1) \leq (1/\delta
\epsilon)^{d/2}$ contradicting the choice of $n$. Therefore $\sdim(G)
\geq \ceil{d/2}$.
\end{proof}

\begin{remark}
The {\em linear arboricity} $\la(G)$ of a graph $G$ is the minimum
number of linear forests (disjoint union of paths) that can cover the
edges of $G$. We cannot hope for a lower bound on $\sdim(G)$ that is
bigger than $\la(G)$ using the above technique. This is because we can
write down one permutation for each linear forest so that every edge of
$G$ appears as an edge of length $1$ in one of these permutations. It
is known that the linear arboricity of a $d$-regular graph is at
most $d/2 + cd^{2/3}(\log d)^{1/3}$ for some absolute constant $c$
(\cite[p. 78]{alon2008probabilistic}, see also \cite{ATW}),
and it is equal to $\ceil{(d+1)/2}$
for almost all random $d$-regular graphs \cite{mcdiarmid1990linear}.
\end{remark}

\begin{small}
%\bibliography{/home/deepak/Research/LaTeX/deepak.bib}
%\bibliographystyle{plain}

\end{small}

\end{document}